\newtheorem{thm}{Theorem}
\newtheorem{lem}[thm]{Lemma}
\newdefinition{remark}{Remark}
\newproof{proof}{Proof}
\begin{document}
\doublespacing
\begin{frontmatter}




\title{Multigrid as an exact solver}


%

\author[cor1,rvt]{Adem Kaya}
\ead{kaya@uni-potsdam.de}

\cortext[cor1]{Corresponding author}

\address[rvt]{Institut f\"{u}r Mathematik, Universit\"{a}t Potsdam
Karl-Liebknecht-Str. 24-25
14476 Potsdam/Golm
Germany}

\begin{abstract}

We provide an alternative Fourier analysis for multigrid  applied to  the Poisson problem in 1D,  based on explicit derivation of spectra of the iteration matrix. 
The new Fourier analysis has advantages over the existing one. It is easy to understand and enables us to write the error equation in terms of the 
eigenvector of the stiffness matrix. When weighted-Jacobi  is used as a smoother with two different 
weights, multigrid is an exact solver.

\end{abstract}

\begin{keyword}
  
   Multigrid, Fourier analysis, smoother, weighted-Jacobi

\end{keyword}

\end{frontmatter}


\section{ Introduction}

We consider the Poisson problem 
with Dirichlet boundary  conditions given by
\begin{eqnarray}
\left\{
  \begin{array}{ll}
  -u^{\prime \prime }(x)  =f(x), \qquad 0<x<1, \\
  u(0)=u(1)=0. 
  \end{array}
\right.
\label{eqn:poisson}
\end{eqnarray}
 The domain of the problem is partitioned into $n-1$ uniform subintervals using the grid points $x_j=jh$ where $h=1/(n-1)$ is the grid size, and $n$ is an odd integer. 
Discretization of the Poisson problem   (\ref{eqn:poisson})  with central finite difference  scheme gives the linear systems of equations,
\begin{displaymath}
A \mathbf{u} =  \mathbf{f}
\end{displaymath}
where  $A =  1/h^2 \text{tridiag}\left(-1, 2, -1 \right) \in \mathbb{R}^{(n-2) \times (n-2)}$. The matrix  $A$ assumes the eigenvectors $\mathbf{v}_k^j = \sin \left(\frac{j k\pi}{n-1}  \right)$ with the 
corresponding eigenvalues $\lambda_k(A) = \frac{4}{h^2}  \sin^2 \left(\frac{k \pi}{2(n-1)}  \right)$ for $k=1, ...,n-2$.
Note that $\mathbf{v}_k^j$ represent the $j-$th entry of the eigenvector $\mathbf{v}_k$.

\section{Smoother}

We use wieghted-Jacobi relaxation as a smoother. Let $a_{i,j}$, $i,j=1,...,n-2$, represent the entries of $A$. We split the matrix $A$ as follows.
\begin{displaymath}
A = D + K 
\end{displaymath}
where $D$ is the diagonal matrix with entries $d_{i,i} = a_{i,i}$, ($i=1,...,n-2$).
Weighted-Jacobi is defined as 
\begin{equation}
\mathbf{x}^{k+1} = (I - \omega D^{-1}A)\mathbf{x}^k + \omega D^{-1}\mathbf{f},
\label{eqn:weightedjacobi}
\end{equation}
where $\omega$ is the weight to be determined. There is no restriction on $\omega$ because  it is not necessary  for a smoother to be 
convergent for a multigrid method to be convergent. Let $R_j^\omega$ represent   the iteration matrix of the weighted-Jacobi given in (\ref{eqn:weightedjacobi}).
\begin{displaymath}
 R_J^\omega\equiv I - \omega D^{-1} A.
\end{displaymath}
We can apply the weighted-Jacobi more than one with different  $\omega$'s to further accelerate the convergence. To this end, we use the following notation.
\begin{displaymath}
 S_J^m \equiv S_J(\omega_1,...,\omega_m) \equiv R_J^{\omega_1}....R_J^{\omega_m},
\end{displaymath}
which means that the weighted-Jacobi is applied $m$ times with the parameters $\omega_i$, $i=1,...,m$. Note that the matrix $S_J^m$ assumes the 
same eigenvectors with the matrix $A$.
 The best way to find the optimal  weights of the weighted-Jacobi method as a smoother, is to do 
   spectral analysis. To this end, we carry out a spectral analysis based on explicit derivation of the spectra of the  iteration matrix of the two-grid.

\section{Other elements of the multigrid method and derivation of the spectrum of the iteration matrix of the two-grid }
In this section,  we introduce interpolation and restriction operators and show some equalities related to them which  are necessary to obtain the 
spectrum of the iteration matrix of the two-grid. We start by setting $A=A^h$, $\mathbf{u}=\mathbf{u}^h$ and $\mathbf{f}=\mathbf{f}^h$ 
where the superscript $h$ which is equivalent to the grid size $h$,  stands for the fine grid.  
The two-grid iteration matrix with  only pre-smoothing  with damped Jacobi relaxation  is given by \cite{Hackbusch1985}
\begin{eqnarray}
  R^{TG} = (I- I_{2h}^h (A^{2h})^{-1} I_h^{2h} A^{h})S_J^m.
\label{eqn:twogridmatrix}
\end{eqnarray}
Our aim is to find the spectrum of $ R^{TG}$. Just for easiness of our analysis, we assumed that $n$ is an odd integer. 
In   Equation (\ref{eqn:twogridmatrix}), 
 the prolongation (interpolation) operator $I_{2h}^h$ is the linear interpolation which has the matrix form
\[
I_{2h}^h=\frac{1}{2}
\begin{bmatrix}
       & 1 &   &  & \\
       & 2 &   &  &   \\
       & 1 & 1 &  & \\
       &   & 2 & \ddots & \\
       &   & 1 & \ddots & \\
       &   &   &    &  1 \\
       &   &   &    &  2 \\
       &   &   &    &  1 
\end{bmatrix} \in \mathbb{R}^{(n-2) \times (n-3)/2}
\]
and  restriction operator $ I_{h}^{2h}$ is the transpose of the prolongation operator 
\begin{eqnarray*}
 I_{h}^{2h} = \left(I_{2h}^{h}  \right)^T.
\end{eqnarray*}
 Coarse grid matrix $A^{2h}$ is defined by Galerkin projection 
\begin{eqnarray}
 A^{2h} = I_{h}^{2h} A^{h} I_{2h}^h.
\label{eqn:galerkinprojection}
\end{eqnarray}
From the above definition, it is easy to show that $A^{2h}$ is also symmetric.
We apply only  pre-smoothing  and do not apply
post-smoothing.

 The prolongation operator $I_{2h}^{h}$ satisfies 
\begin{eqnarray}
 I_{2h}^{h} \mathbf{v}_{k}^{2h} 
  =  \cos^2\left(\frac{k \pi }{2(n-1)}\right)
      \mathbf{v}_k^{h}  
 - \sin^2\left(\frac{k \pi }{2(n-1)}\right)
  \mathbf{v}_{n-1-k}^h,  \qquad 1 \leq k \leq \frac{n-3}{2}
  \label{eqn:prolongation}
\end{eqnarray}
where
\begin{eqnarray}
 \mathbf{v}_{k,j}^{2h} = \sin \left( \frac{2jk\pi}{n-1}\right), \qquad 1\leq j \leq \frac{n-3}{2}.
\label{eqn:coarseeigvector}
 \end{eqnarray}
The restriction operator $I_h^{2h}$  which is the transpose of the prolongation operator   has the following properties. 
\begin{eqnarray}
 I_h^{2h} \mathbf{v}_k^h =2 \cos^2\left(\frac{k\pi }{2(n-1)}\right)\mathbf{v}_{k}^{2h}  \quad \textmd{for} \quad  1 \leq k \leq \frac{n-3}{2}
\label{eqn:restrciton1}
\end{eqnarray}
and
\begin{eqnarray}
 I_h^{2h} \mathbf{v}_{n-1-k}^h =-2 \sin^2\left(\frac{k\pi }{2(n-1)}\right)\mathbf{v}_{k}^{2h}   \quad  \textmd{for} \quad  1 \leq k  \leq \frac{n-3}{2}. 
\label{eqn:restrciton2}
 \end{eqnarray}
As we stated before, the coarse grid matrix is obtained by Galerkin projection. That is,
\begin{eqnarray*}
 A^{2h} = I_{h}^{2h}A^{h} I_{2h}^h.
\end{eqnarray*}
Using this definition and properties of  the restriction and prolongation operators in
(\ref{eqn:prolongation}), (\ref{eqn:restrciton1}) and  (\ref{eqn:restrciton2}), we obtain the spectrum of the coarse matrix $A^{2h}$.
\begin{eqnarray}
 A^{2h} \mathbf{v}_{k}^{2h} = \left(2 \lambda_k(A^{h}) \cos^4(k \pi h/2) +
    2\lambda_{n-1-k}(A^{h}) \sin^4(k\pi h/2 ) \right) \mathbf{v}_{k}^{2h}, \qquad  1 \leq k \leq \frac{n-3}{2}.
\label{eqn:eigcoarse}    
\end{eqnarray}

From the above observations, it is very reasonable to expect that the eigenvectors of the matrix $R^{TG}$ are  linear combinations of 
$\mathbf{v}_k$ and $\mathbf{v}_{n-1-k}$.  We assume that $\mathbf{b}_k=\mathbf{v}_{k} + c \mathbf{v}_{n-1-k}$ is an eigenvector of 
the matrix $R^{TG}$ where $c$ is  to be determined. Imposing $\mathbf{b}_k$ into the definition of $R^{TG}$ in (\ref{eqn:twogridmatrix})
and using the properties of the 
smoother, prolongation, restriction operators and coarse grid matrix we end up with 
\begin{eqnarray*}
 R^{TG}\mathbf{b}_k = \mathbf{v}_k  \left( \lambda_k(S_j^m) - \frac{2 \cos^4(k\pi h/2) \lambda_k(A^h) \lambda_k(S_j^m)}{\lambda_k(A^{2h})} 
                     +  c \frac{2\sin^2(k\pi h/2) \cos^2(k\pi h/2) \lambda_{n-1-k}(A^h) \lambda_{n-1-k}(S_j^m) }{\lambda_k(A^{2h})}   \right) \\
                    + c \mathbf{v}_{n-1-k} \left(   \lambda_{n-1-k}(S_j^m) - \frac{2 \sin^4(k\pi h/2) \lambda_{n-1-k}(A^h) \lambda_{n-1-k}(S_j^m)}{\lambda_k(A^{2h})} 
                     +  \frac{1}{c} \frac{2\sin^2(k\pi h/2) \cos^2(k\pi h/2) \lambda_{k}(A^h) \lambda_{k}(S_j^m) }{\lambda_k(A^{2h})}   \right). 
\end{eqnarray*}
Using   $\lambda_k(A^{2h})$ given in (\ref{eqn:eigcoarse})  and equating the coefficients of $\mathbf{v}_k $ 
and $c \mathbf{v}_{n-1-k}$ in above equation,  we get the following quadratic equation. 
\begin{eqnarray*}
c^2 \left( 2\sin^2(k\pi h/2) \cos^2(k\pi h/2) \lambda_{n-1-k}(A^h) \lambda_{n-1-k}(S_j^m)  \right) \\
        + c \left( 2 \sin^4(k\pi h/2) \lambda_{n-1-k}(A^h) \lambda_{k}(S_j^m)
         -2 \cos^4(k\pi h/2) \lambda_{k}(A^h) \lambda_{n-1-k}(S_j^m) \right)
         \\ -  2 \sin^2(k\pi h/2) \cos^2(k\pi h/2) \lambda_{k}(A^h) \lambda_{k}(S_j^m) =0.
\end{eqnarray*}
Solving the above equation for $c$, we obtain 
\begin{eqnarray}
 c_1 =  \frac{\cos^2(k\pi h/2)\lambda_{k}(A^h)}{\sin^2(k\pi h/2)\lambda_{n-1-k}(A^h)}
\label{eqn:fristrootc}
\end{eqnarray}
and
\begin{eqnarray}
 c_2 = - \frac{\sin^2(k\pi h/2)\lambda_{k}(S_j^m)}{\cos^2(k\pi h/2)\lambda_{n-1-k}(S_j^m)}.
\label{eqn:secondrootc}
\end{eqnarray}
Note that eigenvalues associated to $c_2$ are all zero. More precisely,  
the two-grid iteration matrix $R^{TG}$ assumes 
the eigenvectors
\begin{eqnarray}
\mathbf{b}_k = \left\{ 
 \begin{array}{ll}
\mathbf{v}_k  +  c_1  \mathbf{v}_{n-1-k}, \qquad 1 \leq k \leq \frac{n-1}{2}      \\
 \mathbf{v}_k  + c_2 \mathbf{v}_{n-1-k}, \qquad \frac{n-1}{2} < k \leq n-2
 \end{array}      
 \right.
 \label{eqn:twogrideigenvector}
\end{eqnarray}
with the corresponding eigenvalues 
\begin{eqnarray}
\lambda_k(R^{TG}) = \left\{
 \begin{array}{ll}
 \frac{ \sin^4(k\pi h/2)  \lambda_{n-1-k}(A^h) \lambda_k(\mathbf{S}_j^m)   +  \cos^4(k\pi h/2) \lambda_{k}(A^h) \lambda_{n-1-k}(S_j^m)} {\lambda_{k}(A^{2h})}  , \qquad 1 \leq k \leq \frac{n-1}{2},      \\
 0, \qquad \frac{n-1}{2} < k \leq n-2.
 \end{array}      
 \right.
  \label{eqn:twogrideigenvalue}
\end{eqnarray}

Note that the coarse grid matrix $\mathbf{A}^{2h}$ obtained  by Galerkin projection,
is just a constant multiple  of the 
original matrix which is obtained by rediscretization of the problem (\ref{eqn:poisson}) on coarse grid.
Since $\lambda_k(A^h)=\frac{4}{h^2} \sin^2(k\pi h/2)$ and $\lambda_{n-1-k}(A^h)= \frac{4}{h^2} \cos^2(k\pi h/2)$, Equation (\ref{eqn:eigcoarse}) reduces to
\begin{eqnarray}
 A^{2h} v_{k}^{2h} = \left(\frac{8}{h^2} \sin^2(k\pi h/2) \cos^2(k\pi h/2)  \right) \mathbf{v}_{k}^{2h} = \frac{2}{h^2} \sin^2(k\pi h) \mathbf{v}_{k}^{2h} , \qquad  1 \leq k \leq \frac{n-3}{2}
\label{eqn:eigcoarse2}    
\end{eqnarray}
where $\mathbf{v}_{k}^{2h}$ is given in (\ref{eqn:coarseeigvector}).
Using explicit expressions of  $\lambda_k(A^h)$ and $\lambda_{n-1-k}(A^h)$,   it is easy to show  that $c_1$ given in (\ref{eqn:fristrootc}), is equal to one. 
Hence, in a more compact form, $R^{TG}$ assumes 
the eigenvectors
\begin{eqnarray*}
\mathbf{b}_k = \left\{ 
 \begin{array}{ll}
\mathbf{v}_k  +  \mathbf{v}_{n-1-k}, \qquad 1 \leq k \leq \frac{n-1}{2},      \\
 \mathbf{v}_k  + c_2 \mathbf{v}_{n-1-k}, \qquad \frac{n-1}{2} < k \leq n-2
 \end{array}      
 \right.
\end{eqnarray*}
with the corresponding eigenvalues 
\begin{eqnarray}
\lambda_k(R^{TG}) = \left\{
 \begin{array}{ll}
 \lambda_k(S_j^m) \sin^2(k\pi h/2) +  \lambda_{n-1-k}(S_j^m)  \cos^2(k\pi h/2), \qquad 1 \leq k \leq \frac{n-1}{2},      \\
 0, \qquad \frac{n-1}{2} < k \leq n-2
 \end{array}      
 \right.
 \label{eqn:eigenvalueslaplace}
\end{eqnarray}
where $c_2$ is given in (\ref{eqn:secondrootc}).

If we apply only one pre-smoothing with weighted-Jacobi, that is, for  $S_j^1$,  nonzero eigenvalues of $R^{TG}$ become
\begin{eqnarray*}
 \lambda_k(R^{TG}) = 1- 2\omega \left(\sin^4(k\pi h/2) + \cos^4(k\pi h/2)   \right), \qquad 1\leq k \leq \frac{n-1}{2}.
\end{eqnarray*}
Note that   $\sin^4(k\pi h/2) + \cos^4(k\pi h/2)$ receives its minimum value which is $0.5$ when $k=(n-1)/2$, and its maximum 
value when $k=1$. The maximum value is very close to one, but it is less than one. Assuming that its maximum value is one, we find the optimal 
$\omega$. In order to minimize the spectral radius of $R^{TG}$ we set
\begin{eqnarray*}
 1- 2\omega  \left(\sin^4(k\pi h/2) + \cos^4(k\pi h/2)   \right)|_{k=1} = -1 + 2 \omega  \left(\sin^4(k\pi h/2) + \cos^4(k\pi h/2)   \right)|_{k=(n-1)/2}.
\end{eqnarray*}
Solving above equation (under the assumption $\left(\sin^4(k\pi h/2) + \cos^4(k\pi h/2)   \right)|_{k=1}=1$), we get $\omega=2/3$. This value 
is same with the  value  proposed \cite{multigridtutorial, Hackbusch1985}. The difference is that  our derivation is totally algebraic. 
Furthermore, for $\omega=\frac{2}{3}$,  $\rho(R^{TG})=1/3$. This algebraic derivation also verifies the usability of the following
classification. The Fourier  modes (eigenvectors) $\mathbf{v}_k$ in the range  $1 \leq k <\frac{n-1}{2}$ are called low-frequency or smooth modes and 
the Fourier modes in the range $\frac{n-1}{2} \leq k \leq n-1$ are called high-frequency or oscillatory modes..

We now consider the case $m=2$ which means that pre-smoothing is applied two times with weighted-Jacobi with different $\omega$'s. This case
has not been considered much by the researchers. If more than one pre-smoothing is applied, then the one   which is found
as the optimal  for  one step pre-smoothing, is generally applied. In this case ($m=2$), the nonzero eigenvalues of $R^{TG}$ are given by
\begin{eqnarray}
 \lambda_k(R^{TG}) = 1- 2(\omega_1+\omega_2)  \left(\sin^4(k\pi h/2) + \cos^4(k\pi h/2)   \right) + 4\omega_1 \omega_2 
                     \left(\sin^6(k\pi h/2) + \cos^6(k\pi h/2)   \right), \qquad 1\leq k \leq \frac{n-1}{2}.
\label{eqn:eigenvaluestwoomega}                     
\end{eqnarray}

\begin{lem} \label{lem:findingw}
 The following equality holds 
 \begin{eqnarray*}
 3 (\sin^4(x) + \cos^4(x)) - 2 (\sin^6(x) + \cos^6(x))=1, \qquad \text{for all} \quad   x \in \mathbb{R}.
 \end{eqnarray*}
\end{lem}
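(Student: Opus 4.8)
The plan is to convert this trigonometric identity into an elementary polynomial identity in a single variable, thereby removing all dependence on $x$ except through the Pythagorean relation. First I would set $s=\sin^2 x$ and $c=\cos^2 x$, so that the fundamental constraint $s+c=1$ holds for every $x\in\mathbb{R}$. Under this substitution the two bracketed quantities become pure power sums: $\sin^4 x+\cos^4 x=s^2+c^2$ and $\sin^6 x+\cos^6 x=s^3+c^3$. The whole problem then reduces to verifying a single algebraic statement about symmetric polynomials subject to $s+c=1$, which is far more transparent than manipulating sixth powers of sines and cosines directly.

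Next I would express each power sum in terms of the elementary symmetric functions $s+c$ and $sc$, using the standard Newton-type identities. For the squares I would write $s^2+c^2=(s+c)^2-2sc$, and for the cubes $s^3+c^3=(s+c)^3-3sc(s+c)$. Substituting $s+c=1$ collapses these to $s^2+c^2=1-2sc$ and $s^3+c^3=1-3sc$, so that both expressions are affine in the single quantity $p:=sc=\sin^2 x\cos^2 x$. The left-hand side of the claimed identity then reads $3(1-2p)-2(1-3p)$, and I would simply expand: the constant terms give $3-2=1$, while the $p$-terms give $-6p+6p=0$. Hence the left side equals $1$ identically, independent of the value of $p$, which establishes the identity for all real $x$.

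Honestly there is no substantive obstacle here; the identity is true by a short symmetric-function computation, and the only thing to get right is the cubic power-sum formula $s^3+c^3=(s+c)^3-3sc(s+c)$, where a careless coefficient would break the cancellation. The structural point worth emphasizing is \emph{why} it works: after imposing $s+c=1$, both power sums depend on $x$ only through $p=\sin^2 x\cos^2 x$, and the specific combination $3(\cdot)-2(\cdot)$ is precisely the one that annihilates the $p$-coefficient. This is exactly the cancellation that, in the surrounding analysis, forces the two-grid eigenvalues in (\ref{eqn:eigenvaluestwoomega}) to become constant, and it is the mechanism by which a suitable choice of the two weights $\omega_1,\omega_2$ makes multigrid an exact solver.
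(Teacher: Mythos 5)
Your proof is correct and follows essentially the same route as the paper: both reduce the identity to elementary algebra via $\sin^2 x+\cos^2 x=1$, differing only in that the paper expands the cubes through the factorization $s^3+c^3=(s+c)(s^2-sc+c^2)$ and recognizes $(\sin^2 x+\cos^2 x)^2$ at the end, while you use the equivalent power-sum form $s^3+c^3=(s+c)^3-3sc(s+c)$ and cancel the $sc$-terms. The difference is purely organizational; no gap.
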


\begin{proof}
\begin{eqnarray*}
 3 (\sin^4(x) + \cos^4(x)) - 2 (\sin^6(x) + \cos^6(x))  = 3 (\sin^4(x) + \cos^4(x)) -2 (\sin^2(x) \\
 + \cos^2(x)) (\sin^4(x) -\sin^2(x)\cos^2(x) + \cos^4(x)) = \sin^4(x) + \cos^4(x) +2\sin^2(x)\cos^2(x) \\
 =  (\sin^2(x) + \cos^2(x))^2 =1.
\end{eqnarray*} 
\end{proof}
First, let us observe what happens if we apply weighted-Jacobi with the optimal weight found  for $m=1$,   two times. Substituting $\omega_1=\omega_2=\frac{2}{3}$ 
into (\ref{eqn:eigenvaluestwoomega}), we get $\lambda_k(R^{TG})=1-\frac{8}{9} = 0.\overline{1}$ for all $k$. That is, 
$\rho(R^{TG})=0.\overline{1}$.  Now, we look for different $\omega$'s for which the spectral radius of $R^{TG}$ is reduced further.
By the Lemma \ref{lem:findingw}, for the choices $\omega_1=1$ and $\omega_2=\frac{1}{2}$, the eigenvalues in  (\ref{eqn:eigenvaluestwoomega}) 
become all zero. This means that all eigenvalues of $R^{TG}$ are zero. In other words, two-grid is an exact solver with only one iteration. 
Moreover, since the coarse matrix obtained by Galarkin projection is just a constant multiple of the original matrix on coarse grid,
multigrid is also an exact solver with only 
one iteration.
Eigenvalues of the smoothers $S_j(\frac{2}{3},\frac{2}{3})$ and   $S_j(1,\frac{1}{2})$ for $n=33$ are presented in Figure \ref{fig:eigweightedjacobi}. Although 
for  $S_j(1,\frac{1}{2})$, the two-grid method is an exact solver, we see from Figure \ref{fig:eigweightedjacobi} that corresponding eigenvalues of the 
oscillatory modes are not zero. Furthermore, the maximum eigenvalue  of  $\mathbf{S}_j(1,\frac{1}{2})$ in magnitude in oscillatory region, is
$|\lambda_{21}(S_j(1,\frac{1}{2}))|=0.124581$ which is grater than the maximum eigenvalue   of $S_j(\frac{2}{3},\frac{2}{3})$ in 
magnitude in 
oscillatory region, which is $\lambda_{31}(S_j(1,\frac{1}{2}))=0.\overline{1}$.

\begin{figure}
 \center
\includegraphics[width=10cm]{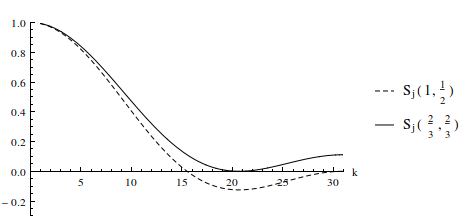}
\caption{Eigenvalues of  weighted-Jacobi  with two steps $S_j^2$  for different $\omega$'s for $n=33$. We assume that the eigenvalues were continuous in $k$.}
\label{fig:eigweightedjacobi}
\end{figure}

\section{The error equation}

Since we have explicit expressions for the eigenvalues of the two-grid iteration matrix $R^{TG}$ in (\ref{eqn:twogrideigenvalue}) and of the
corresponding eigenvectors in (\ref{eqn:twogrideigenvector}), which are linear 
combination of the eigenvectors of $A$, we can see which modes are damped more rapidly. To this end,  we write the error $\mathbf{e}$
in terms of the eigenvectors of $R^{TG}$.
\begin{eqnarray*}
 \mathbf{e} = \sum_{k=1}^{n-2}d_i \mathbf{b}_k = \sum_{k=1}^{(n-1)/2}d_k (\mathbf{v}_k + c_1(k) \mathbf{v}_{n-1-k} ) +  \sum_{k=(n+1)/2}^{n-2}d_k (\mathbf{v}_k + c_2(k) \mathbf{v}_{n-1-k} )
\end{eqnarray*}
where $d_k$ is any constant,  $c_1 = c_1(k)$ and  $c_2 = c_2(k)$ which are given in (\ref{eqn:fristrootc}) and (\ref{eqn:secondrootc}), respectively. 
Since $\lambda_k(R^{TG})=0$ for $k=(n+1)/2...n-2$, after $m$ iterations, the error becomes
\begin{eqnarray*}
 \mathbf{e}^m =   \sum_{k=1}^{(n-1)/2}d_k \lambda_k^m(R^{TG}) (\mathbf{v}_k + c_1(k) \mathbf{v}_{n-1-k} ) = \sum_{k=1}^{(n-1)/2}d_k \lambda_k^m(R^{TG}) \mathbf{v}_k
  + \sum_{k=(n-1)/2}^{n-2}l_k \lambda_{n-1-k}^m(R^{TG})  c_1(n-1-k) \mathbf{v}_{k}  
 \end{eqnarray*}
where $\mathbf{e}^m$ stands for the error after $m$ iterations and $l_k$ is any constant. In above equation on the right, the first sum  
contains the smooth modes and  the second sum contains oscillatory modes.
The first eigenvalue $\lambda_1(\mathbf{R}^{TG})$ is associated with the smoothest 
and the most oscillatory mode. The eigenvalue $\lambda_{(n-1)/2}(R^{TG})$ is associated only with   the eigenvector $\mathbf{v}_{(n-1)/2}$.

\section{Conclusion}

In this work, we provided an alternative Fourier analysis for multigrid  applied to  the Poisson problem in 1D. 
We related multigrid with the exact solver.

\textbf{Note:} \textit{This work is not going to be submitted to any journal. It is free to download and disseminate it}.

\bibliographystyle{plain}
\bibliography{ref}

\appendix

\end{document}